\newtheorem{theorem}{Theorem}
\newtheorem{proposition}[theorem]{Proposition}
\theoremstyle{definition}
\newtheorem{example}[theorem]{Example}
\newtheorem{definition}[theorem]{Definition}
\newtheorem{remark}[theorem]{Remark}
\newcommand{\diam}{\operatorname{diam}}
\newcommand{\keywords}{\noindent \textit{Keywords:} }
\newcommand{\MSC}{\noindent \textit{2010 MSC:} }
\begin{document}

\title{Dissipative flows, global attractors and shape theory \footnote{The authors are supported by MINECO (MTM2015- 63612-P). The first author is also supported by the FPI grant BES-2013-062675 and the mobility grant EEBB-I-16-11119.}}

\author{H\'ector Barge\footnote{Facultad de C. C. Matem\'aticas, Universidad Complutense de Madrid, Madrid 28040, Spain; email:hbarge@ucm.es}, Jos\'e M. R. Sanjurjo\footnote{Facultad de C. C. Matem\'aticas, Universidad Complutense de Madrid, Madrid 28040, Spain; email:jose\_sanjurjo@mat.ucm.es}}

\date{}

\maketitle

\begin{center}
{\em Dedicated to the memory of Sibe Marde\v si\'c.}
\end{center}

\begin{abstract}
In this paper we study continuous parametrized families of dissipative flows, which are those flows having a global attractor. The main motivation for this study comes from the observation that, in general, global attractors are not robust, in the sense that small
perturbations of the flow can destroy their globality. We give a necessary and sufficient condition for a global attractor to be continued to a global attractor. We also study, using shape theoretical methods and the Conley index, the bifurcation global to non-global.
\end{abstract}

\MSC{34D23, 54H20, 55P55, 34D10, 37C70, 37B30.}
\vspace{3pt}

\keywords{Dissipative flows, shape, Alexandrov compactification, Conley index, continuation, bifurcation.}

\section*{Introduction}
This paper is devoted to the study of continuous parametrized families of dissipative flows $\varphi_\lambda:M\times\mathbb{R}\to M$, where $\lambda\in [0,1]$ and $M$ is a non-compact, locally compact metric space. 

Dissipative flows have been introduced by Levinson \cite{Lev} and they are a very important class of flows since, as it has been proved by Pliss \cite{Pli}, they agree with those flows having a global attractor. We provide a proof of this fact for the sake of completeness (see Proposition~\ref{gat}). Dissipative flows play a crucial role in mathematical ecology since they model the limitations of the environment (see for example \cite{ButWalt, Gar, Hof, Sanfine}). An interesting reference regarding dissipative flows is \cite{Hal}.

The main motivation
for this paper comes from the observation, pointed out in Example 5, that,
in general, global attractors are not robust, in the sense that small
perturbations of the flow can destroy their globality.

In section~\ref{sec:1} we recall the definition of dissipative flow and we prove, for the sake of completeness, that a flow is dissipative if and only if it possesses a global attractor (Proposition~\ref{gat}). We also see in Example~\ref{bounded} that a small perturbation of a global attractor can produce
a non-global attractor. Moreover, we see that in fact, it can have a bounded region of attraction. This fact motivates the definition of coercive family of dissipative flows. In addition, we introduce the concept of uniformly dissipative family of flows and we prove Theorem~\ref{undis} which establishes that for a family of dissipative flows a global attractor continues to global attractors if and only if the family is uniformly dissipative. We illustrate this result by showing that in the Lorenz equations the global attractor continues to global attractors. We finish this section by proving Theorem~\ref{coercive}, in which a topological study of coercive families of dissipative flows in $\mathbb{R}^n$ is made. In particular we see that if a parametrized family of dissipative flows is coercive then, given a continuation $K_\lambda$ of the global attractor $K_0$ of $\varphi_0$,  an isolated invariant compactum $C_\lambda\subset\mathbb{R}^n\setminus K_\lambda$ with the shape of $S^{n-1}$ which attracts uniformly all the points of the unbounded component of its complement and repels all the points of the bounded one which are not in $K_\lambda$ is created for all $\lambda>0$ small. Besides, we see that the diameter of $C_\lambda$ goes to infinity when $\lambda$ goes to zero. Moreover, we see that the existence of such a $C_\lambda$ for all $\lambda>0$ small is sufficient for the family to be coercive.

In section~\ref{sec:2} we study in all its generality the mechanism which produces the bifurcation global to non-global
in families of dissipative flows. With this aim we introduce the definition of polar family of dissipative flows and we see (Proposition~\ref{polar}) that a family of dissipative flows is polar if and only if every continuation $K_\lambda$ of the global attractor $K_0$ of $\varphi_{0}$ satisfies that $K_\lambda$ is not a global attractor for all $\lambda>0$ small. In addition we prove Theorem~\ref{general} which establishes that if a parametrized family of dissipative flows is polar then, given a continuation $K_\lambda$ of the global attractor $K_0$ of $\varphi_0$, for all $\lambda>0$ small the maximal invariant compactum $C_\lambda\subset\mathbb{R}^n\setminus K_\lambda$ is non-empty, isolated and has trivial cohomological Conley index.  

 Through the paper we often consider the Alexandrov compactification $M\cup\{\infty\}$ and we extend the family of flows $\varphi_\lambda:M\times\mathbb{R}\to M$ to a family of flows, which we also denote by $\varphi_\lambda$ , on $M\cup\{\infty\}$ by leaving $\infty$ fixed for all $\lambda\in[0,1]$.

A form of homotopy theory, namely \emph{shape theory}, which is the most {sui\-ta\-ble} for the study of global topological properties in dynamics, will be {occa\-sio\-na\-lly used}. Although a deep knowledge of shape theory is not necessary to understand the paper we {re\-co\-mmend} to the reader Borsuk's monography \cite{Bormono} and the books by Marde\v{s}i\'c and Segal \cite{MarSe} and Dydak and Segal \cite{DySe} for an exhaustive treatment of the subject and \cite{KapRod, RobSal,Rob2, Sanin, SanMul, Sanjuni, Sanjsta, SGRAC, GMRSNLA, GMRSo} for a concise introduction and some applications to dynamical systems. 

We make use of some notions of algebraic topology. Good references for this material are the books of Hatcher and Spanier \cite{Hat,Span}. We will use the notation $H_*$ and $H^*$ for the singular homology and cohomology with integer coefficients respectively. 

  The main reference for the elementary concepts of dynamical systems will be \cite{BhSz} but we also recommend \cite{Rob1, PaMe, Pil}. By the \emph{omega-limit} of a point $x$ we understand the set $\omega(x)=\bigcap_{t>0}\overline{x[t,\infty)}$ while the \emph{negative omega-limit} is the set $\omega^*(x)=\bigcap_{t<0}\overline{x(-\infty,t]}$. An invariant compactum $K$ is \emph{stable} if every neighborhood $U$ of $K$
contains a neighborhood $V$ of $K$ such that $V[0,\infty
)\subset U$. Similarly, $K$ is \textit{negatively stable} if every
neighborhood $U$ of $K$ contains a neighborhood $V$ of $K$ such that $V
(-\infty ,0]\subset U$. The compact invariant set $K$ is said to be \textit{%
attracting} provided that there exists a neighborhood $U$ of $K$ such that $%
\emptyset\neq\omega (x)\subset K$ for every $x\in U$ and \emph{repelling} if there
exists a neighborhood $U$ of $K$ such that $\emptyset\neq\omega ^{\ast }(x)\subset K$ for
every $x\in U$. An \textit{attractor }(or \emph{asymptotically stable} compactum)\textit{\ }is an attracting stable set and a \textit{repeller }is
a repelling\textit{\ }negatively stable set. We stress the fact that
stability (positive or negative) is required in the definition of attractor
or repeller. If $K$ is an attractor, its region (or basin) of attraction $%
\mathcal{A}(K)$ is the set of all points $x\in M$ such that $\emptyset\neq\omega (x)\subset
K $. It is well known that $\mathcal{A}(K)$ is an open invariant set. The region of repulsion of a repeller $K$, $\mathcal{R}(K)$ is defined in a dual way. If in particular $\mathcal{A}(K)$ is the whole phase space we say that $K$ is a \emph{global attractor}. 

 Notice that, since we will deal with parametrized families of flows we will use the notation $\omega_\lambda(x)$, $\omega^*_\lambda(x)$, $\mathcal{A}_\lambda(K)$ and $\mathcal{R}_\lambda(K)$ to denote the omega-limit, negative omega-limit, region of attraction and region of repulsion with respect to the flow $\varphi_\lambda$. 
 
There is a nice topological relation between an attractor $K$ and its region of attraction $\mathcal{A}(K)$ expressed in terms of shape theory. It establishes that the inclusion $i:K\hookrightarrow\mathcal{A}(K)$ a shape equivalence \cite[Theorem~3.6]{KapRod}. As a consequence, if $K$ is a global attractor of a flow in an Euclidean space it must have the shape of a point. In particular, it must be connected.

An important class of invariant compacta is the so-called \emph{isolated invariant sets} (see \cite{Con,ConEast,East} for details). These are compact invariant sets $K$ which possess an \emph{isolating neighborhood}, i.e. a compact neighborhood $N$  such that $K$ is the maximal invariant set in $N$.  

 A special kind of isolating neighborhoods will be useful in the sequel, the so-called \emph{isolating blocks}, which have good topological properties. More precisely, an isolating block $N$ is an isolating neighborhood such that there are compact sets $N^i,N^o\subset\partial N$, called the entrance and the exit sets, satisfying
\begin{enumerate}
\item $\partial N=N^i\cup N^o$;
\item for each $x\in N^i$ there exists $\varepsilon>0$ such that $x[-\varepsilon,0)\subset M-N$ and for each $x\in N^o$ there exists $\delta>0$ such that $x(0,\delta]\subset M-N$;
\item for each $x\in\partial N-N^i$ there exists $\varepsilon>0$ such that $x[-\varepsilon,0)\subset \mathring{N}$ and for every $x\in\partial N-N^o$ there exists $\delta>0$ such that $x(0,\delta]\subset\mathring{N}$.
\end{enumerate} 

These blocks form a neighborhood basis of $K$ in $M$. 

Let $K$ be an isolated invariant set. Its \emph{Conley index} $h(K)$ is defined as the pointed homotopy type of the topological space $(N/N^o,[N^o])$, where $N$ is an isolating block of $K$.  A weak version of the Conley index which will be useful for us is the \emph{cohomological index} defined as $CH^*(K)=H^*(h(K))$.  The \emph{homological index} $CH_*(K)$ is defined in an analogous fashion using homology. Notice that in fact, $CH^*(K)\cong H^*(N, N^o)$ and $CH_*(K)\cong H_*(N,N^o)$. Our main references for the Conley index theory are \cite{Con, ConZehn, Sal}.

In this paper the concept of continuation of isolated invariant sets plays a crucial role. Let $M$ be a locally compact metric space, and let $\varphi_\lambda:M\times\mathbb{R}\to  M$ be a parametrized family of flows (parametrized by $\lambda\in[0,1]$, the unit interval). The family $(K_\lambda)_{\lambda\in J}$,  where $J\subset[0,1]$ is a closed (non-degenerate) subinterval and, for each $\lambda\in J$, $K_\lambda$ is an isolated invariant set for $\varphi_\lambda$ is said to be a \emph{continuation} if for each $\lambda_0\in J$ and each $N_{\lambda_0}$ isolating neighborhood for $K_{\lambda_0}$, there exists $\delta>0$ such that $N_{\lambda_0}$ is an isolating neighborhood for $K_\lambda$ for every $\lambda\in (\lambda_0 -\delta, \lambda_0 + \delta)\cap J$. We say that the family $(K_\lambda)_{\lambda\in J}$ is a continuation of $K_{\lambda_0}$ for each $\lambda_0\in J$.

Notice that \cite[Lemma~6.1]{Sal} ensures that if $K_{\lambda_0}$ is an isolated invariant set for $\varphi_{\lambda_0}$, there always exists  a continuation $(K_\lambda)_{\lambda\in J_{\lambda_0}}$ of $K_{\lambda_0}$ for some closed (non-degenerate) subinterval $\lambda_0\in J_{\lambda_0}\subset[0,1]$.

There is a simpler definition of continuation based on \cite[Lemma 6.2]{Sal}. There, it is proved that if $\varphi_\lambda : M \times\mathbb{R}\to M$ is a parametrized family of flows and if $N_1$ and $N_2$ are isolating neighborhoods of the same isolated invariant set for $\varphi_{\lambda_0}$, then there exists $\delta>0$ such that $N_1$ and $N_2$ are isolating neighborhoods for $\varphi_\lambda$, for every $\lambda\in(\lambda_0-\delta,\lambda_0 +\delta)\cap[0,1]$, with the property that, for every $\lambda$, the isolated invariant subsets in $N_1$ and $N_2$ which have $N_1$ and $N_2$ as isolating neighborhoods agree.

Therefore, the family $(K_\lambda)_{\lambda\in J}$, with $K_\lambda$ an isolated invariant set for $\varphi_\lambda$, is a continuation if for every $\lambda_0\in J$ there are an isolating neighborhood $N_{\lambda_0}$ for $K_{\lambda_0}$ and a $\delta > 0$ such that $N_{\lambda_0}$ is an isolating neighborhood for $K_\lambda$, for every $\lambda\in(\lambda_0-\delta,\lambda_0 +\delta)\cap J$.

We will make use of the fact that if $(K_\lambda)_{\lambda\in J}$ is a continuation then, for each $\lambda_1,\lambda_2\in J$, the Conley indices $h(K_{\lambda_1})$ and $h(K_{\lambda_2})$ agree (see \cite[Corollary~6.8]{Sal}). A consequence of this fact is that if $K_{\lambda_0}$ is a non-empty attractor and $(K_\lambda)_{\lambda\in J}$ is a continuation of it, then $K_\lambda$ is non-empty for each $\lambda\in J$.

We are interested in continuations $(K_\lambda)_{\lambda\in J}$, with $0\in J$, where $K_0$ is a global attractor. Since $K_0$ is an attractor, using \cite[Theorem~4]{Sanjuni} it follows that there exists $0<\lambda_0\in J$ such that, for $\lambda<\lambda_0$, $K_\lambda$ is an attractor which has the shape of $K_0$. As a consequence, if the phase space is a Euclidean space, then for small values of $\lambda$, $K_\lambda$ has the shape of a point and, in particular, it is connected.

Notice that, since this should not lead to any confusion, sometimes we will only say that $K_\lambda$ is a continuation of $K_{\lambda_0}$ without specifying the subinterval $J\subset[0,1]$ to which the parameters belong.

\section{Continuation of global attractors}\label{sec:1}

In this section we study parametrized families of dissipative flows and continuations of global attractors. In particular, we will see that a small perturbation of a global attractor can be a non-global attractor since, in fact, it can have a bounded region of attraction. We will also give a necessary and sufficient condition for the family $(K_\lambda)_{\lambda\in [0,1]}$ where, for each $\lambda\in[0,1]$ $K_\lambda$ is the global attractor of $\varphi_\lambda$, to be a continuation of the global attractor $K_0$ of $\varphi_0$. Besides all this we give a descriptive characterization of coercive families of dissipative flows. 

\begin{definition}[Levinson 1944]
We say that a flow $\varphi:M\times\mathbb{R}\to M$ is dissipative provided that for each $x\in M$, $\omega(x)\neq\emptyset$ and the closure of the set
\[
\Omega(\varphi)=\bigcup_{x\in M}\omega(x)
\] 
is compact.
\end{definition}

\begin{proposition}[Pliss 1966] \label{gat}
A flow $\varphi:M\times\mathbb{R}\to M$ is dissipative if and only if there exists a global attractor.
\end{proposition}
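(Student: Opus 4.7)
The easy direction is ($\Leftarrow$): if $K$ is a global attractor then $K$ is compact and $\emptyset\neq\omega(x)\subset K$ for every $x\in M$, so $\Omega(\varphi)\subset K$ and $\overline{\Omega(\varphi)}$ is compact. The substantive content is the converse.

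For ($\Rightarrow$), let $L=\overline{\Omega(\varphi)}$, which is compact by dissipativity. Since each $\omega(x)$ is invariant, so is $\Omega(\varphi)$, and because $\varphi$ is continuous and each $\varphi_t$ is a homeomorphism, the closure $L$ is invariant as well. I would then pass to the Alexandrov compactification $M^+=M\cup\{\infty\}$, extending $\varphi$ by fixing $\infty$ as the paper does throughout. Because $M^+$ is compact metric, the omega-limit $\omega^+(x)$ computed in $M^+$ is nonempty, compact and \emph{connected}. The goal of the first main step is to establish that $\omega^+(x)=\omega(x)\subset L$; equivalently, $\infty\notin\omega^+(x)$. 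Choose any open neighborhood $V\supset L$ in $M$ with $\overline V$ compact; then $V$ and $U:=M^+\setminus\overline V$ are disjoint open sets of $M^+$, with $L\cap\partial V=\emptyset$ because $L\subset V$. Since $\omega^+(x)\cap M=\omega(x)\subset L\subset V$ is nonempty, if $\infty\in\omega^+(x)$ the connected set $\omega^+(x)$ would meet both $V$ and $\{\infty\}\subset U$ and therefore would have to meet $M^+\setminus(V\cup U)=\partial V$, forcing a point of $\omega(x)\cap\partial V\subset L\cap\partial V$, a contradiction. Hence $\omega^+(x)\subset L$, so $x[0,\infty)$ has compact closure in $M$ and, by the usual characterization of the omega-limit in a compact space, eventually enters (and stays in) every open neighborhood of $L$.

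The second step is to upgrade this ``asymptotic'' absorption into a compact positively invariant absorbing neighborhood $B$ of $L$. Pick nested open neighborhoods $L\subset W\subset\overline W\subset V\subset\overline V$ of $L$ with $\overline V$ compact. By Step 1 every point of the compact set $\overline V$ has its orbit eventually inside $W$; a compactness-plus-upper-semicontinuity argument in $M^+$ (if the hitting times were unbounded, sequences $x_n\in\overline V$ and $t_n\to\infty$ with $x_n(t_n)\notin W$ would, after extraction, contradict the eventual confinement of the limit orbit) yields a uniform time $T_0$ with $\varphi([T_0,\infty),\overline V)\subset W$. Then $B:=\overline{\varphi([0,T_0],\overline V)}\cup\overline V$ is compact, positively invariant (images after time $T_0$ land in $W\subset V\subset B$), and every orbit of $\varphi$ eventually enters $B$ because it eventually enters $V\subset B$.

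Finally, set
\[
K=\bigcap_{t\geq 0}\varphi(t,B).
\]
This is a nonempty compact invariant set contained in $L$; it attracts $B$ because $B$ is a compact positively invariant neighborhood of $K$ whose forward images shrink to $K$, and it attracts all of $M$ because every orbit eventually enters $B$. The same compact positively invariant neighborhood $B$ witnesses the stability of $K$, so $K$ is asymptotically stable with $\mathcal{A}(K)=M$, i.e.\ a global attractor. I expect the subtle step to be the second one: passing from ``each orbit is eventually in $W$'' (pointwise) to a uniform hitting time on $\overline V$, which is what actually buys the positively invariant compact absorbing set; the argument has to exploit compactness of $M^+$ and the fact that $\infty$ has been ruled out of every $\omega^+(x)$ in Step 1.
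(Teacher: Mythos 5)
Your easy direction and your Step 1 are fine (the connectedness argument in $M\cup\{\infty\}$ correctly shows $\infty\notin\omega^+(x)$ for every $x$, which is essentially the paper's observation that every orbit eventually leaves a neighborhood of $\infty$). The proof breaks at exactly the point you flagged as subtle: the claim in Step 2 that pointwise eventual absorption upgrades to a \emph{uniform} time $T_0$ with $\varphi([T_0,\infty)\times\overline V)\subset W$. The extraction argument is invalid because continuity of the flow is not uniform over infinite time horizons: from $x_n\to x$ and $t_n\to\infty$ you get no control of $x_nt_n$ in terms of the orbit of $x$, so ``eventual confinement of the limit orbit'' is never contradicted. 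Worse, the uniform statement itself is false for a legitimate choice of $W,V$. Take a dissipative planar flow whose only limit sets are an attracting fixed point $p$ and a saddle $q$, where one branch of the unstable manifold of $q$ makes a large (bounded) excursion before converging to $p$. Then $L=\overline{\Omega(\varphi)}=\{p,q\}$; choose $W$ a small neighborhood of $\{p,q\}$ and $V\supset\overline W$ compact. Points $x_n\in\overline V$ approaching the stable manifold of $q$ linger near $q$ for times $\tau_n\to\infty$ and then leave $W$ along the excursion at times $t_n\to\infty$, while the limit point $x\in W^s(q)$ is eventually confined in $W$. So no uniform $T_0$ exists, and your absorbing set $B$ cannot be built this way. (A smaller issue: metrizability of $M\cup\{\infty\}$ needs separability of $M$, which is not assumed; but connectedness of $\omega^+(x)$ holds in compact Hausdorff spaces, so this is harmless.)

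The paper avoids this difficulty entirely: it only needs the non-uniform fact that each point of $U\setminus\{\infty\}$ leaves the neighborhood $U$ of $\infty$ at \emph{some} positive time, which by \cite[Lemma~3.1]{Sal} already makes $\{\infty\}$ a repeller in the compact space $M\cup\{\infty\}$; the global attractor is then the dual attractor of $\{\infty\}$ in the attractor--repeller decomposition, with no absorbing-set construction needed. If you want to keep your route, you must replace the compactness-extraction step by a genuine argument producing a compact positively invariant absorbing set (e.g.\ a limiting argument on orbit segments $x_n[0,t_n]$ yielding full bounded orbits, applied to the maximal bounded invariant set), which is essentially the substance of the theorem rather than a routine remark.
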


\begin{proof}
If $\varphi$ has a global attractor $K$, then it is clear that $\Omega(\varphi)\subset K$ and hence its closure is compact ensuring the dissipativeness of $\varphi$.

Conversely, suppose that $\varphi$ is dissipative. We extend the flow to the Alexandrov compactification $M\cup\{\infty\}$ leaving $\infty$ fixed. Let $N$ be a compact neighborhood of $\overline{\Omega(\varphi)}$. Then, $U=(M\cup\{\infty\})\setminus N$ is a neighborhood of $\{\infty\}$ and, given any point $x\in M\setminus N=U\setminus\{\infty\}$, since $\emptyset\neq\omega(x)\subset N$, there exists a time $t_x>0$ such that $xt_x\notin U$. Hence, by \cite[Lemma~3.1]{Sal} $\{\infty\}$ is a repeller.  

Since $M\cup\{\infty\}$ is compact and $\{\infty\}$ is a repeller, there exists a non-empty attractor $K\subset M$ dual to $\{\infty\}$. It is clear that this attractor $K$ must be a global attractor.
\end{proof}

\begin{remark}
The dissipativeness of $\varphi$ is equivalent to $\{\infty\}$ being a repeller.
\end{remark}

An easy consequence of these considerations is the following proposition.

\begin{proposition}
Let $\varphi $ be a dissipative flow in $\mathbb{R}^{n}$ and $K$ an
attractor of $\varphi $. Then $K$ is global if and only if $\mathcal{A}(K)\setminus K$
does not contain bounded orbits.
\end{proposition}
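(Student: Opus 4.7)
\emph{Plan.} My approach is to apply the attractor--repeller decomposition twice: once in the Alexandrov compactification for the direct implication, and once inside the global attractor $K'$ for the converse. The only non-formal ingredient is the connectedness of a global attractor in $\mathbb{R}^n$, which was recalled in the introduction via the shape equivalence $K'\hookrightarrow\mathcal{A}(K')$.

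\emph{Direct implication.} Extend $\varphi$ to $\mathbb{R}^n\cup\{\infty\}$ leaving $\infty$ fixed. By the remark after Proposition~\ref{gat}, dissipativeness is equivalent to $\{\infty\}$ being a repeller. If $K$ is the global attractor then $K$ is precisely the attractor dual to $\{\infty\}$, and the standard decomposition of the compact phase space gives
\[
\mathbb{R}^n\cup\{\infty\}=K\sqcup\{\infty\}\sqcup C,
\]
where $C$ is the set of connecting orbits, that is, points $x$ with $\omega(x)\subset K$ and $\omega^{*}(x)=\{\infty\}$. For any $x\in\mathcal{A}(K)\setminus K=\mathbb{R}^n\setminus K$ we have $x\in C$, so $\omega^{*}(x)=\{\infty\}$; in $\mathbb{R}^n$ this means the negative semitrajectory of $x$ leaves every compact set and, in particular, the orbit of $x$ is unbounded.

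\emph{Converse.} Suppose $K$ is not global. By Proposition~\ref{gat} the flow $\varphi$ admits a global attractor $K'$, and $K'$ attracts every bounded set; by the standard argument ($K=\varphi_t(K)\subset B_\varepsilon(K')$ for large $t$, for every $\varepsilon>0$) the compact invariant set $K$ is contained in $K'$, and in fact $K\subsetneq K'$ because $\mathcal{A}(K)\neq\mathbb{R}^n=\mathcal{A}(K')$. Restricting $\varphi$ to $K'$ preserves the asymptotic stability of $K$, so $K$ is still an attractor there; let $K^{*}\subset K'$ be its dual repeller, so that
\[
K'=K\sqcup K^{*}\sqcup C',
\]
with $C'$ the orbits satisfying $\omega(x)\subset K$ and $\omega^{*}(x)\subset K^{*}$. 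Every such orbit lies in the compact set $K'$ (hence is bounded), belongs to $\mathcal{A}(K)$ and avoids $K$, so producing a single $x\in C'$ yields the required bounded orbit in $\mathcal{A}(K)\setminus K$.

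Finally, $C'$ is non-empty: as mentioned in the introduction, a global attractor in $\mathbb{R}^n$ has the shape of a point and is therefore connected, so $K'$ is connected. If $C'=\emptyset$ then $K'=K\sqcup K^{*}$ would split the connected compactum $K'$ into two disjoint closed subsets; since $K\neq\emptyset$ this forces $K^{*}=\emptyset$ and hence $K'=K$, contradicting $K\subsetneq K'$. Thus $C'\neq\emptyset$ and the contradiction with the hypothesis is complete.

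\emph{Main obstacle.} The proof is essentially formal once the attractor--repeller decomposition is in place on each of the two compact ambient spaces. The one place where something beyond bookkeeping is needed is the non-emptiness of $C'$ in the converse, and that rests on the shape-theoretic connectedness of $K'$ quoted in the introduction.
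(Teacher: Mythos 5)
Your proof is correct. The paper itself gives no written proof of this proposition --- it is stated as an ``easy consequence'' of the preceding considerations --- and your argument is a faithful elaboration of exactly those considerations: the attractor--repeller pair $(K,\{\infty\})$ in the Alexandrov compactification for the forward implication, and the attractor--repeller decomposition of $K$ inside the global attractor $K'$, together with the shape-theoretic connectedness of $K'$ recalled in the introduction, for the converse. The only step you assert without justification is that a global attractor coincides with the dual attractor of $\{\infty\}$ (i.e.\ uniqueness of the global attractor), but this follows from the same uniform-attraction-of-compacta argument you already invoke to obtain $K\subset K'$, so the gap is cosmetic.
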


Hence, if $K$ is an attractor of a dissipative flow, $\mathcal{A}(K)$ being
bounded is in sharpest contrast to $K$ being global.

\begin{example}\label{bounded}
Consider the family of ordinary differential equations defined on the plane in polar coordinates
\begin{equation}\label{eqn1}
 \begin{cases}
    \dot{r}=-r^3\left(\dfrac{1}{r}-\lambda\right)^2,\\
 \dot{\theta}=1
  \end{cases}\quad \lambda\in [0,1]
\end{equation}
\begin{figure}[h]
\centering
\includegraphics[scale=0.8]{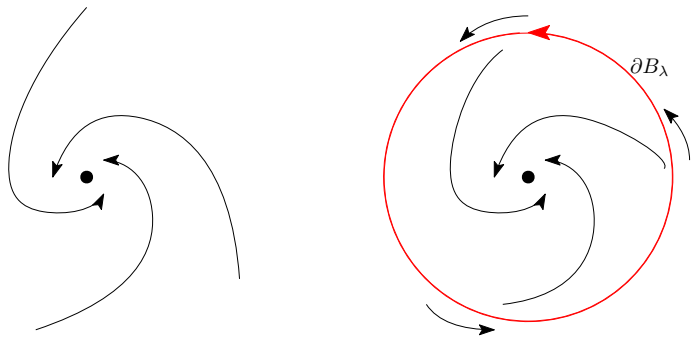} 
\caption{Phase portrait of the family of equations \eqref{eqn1}.}
\label{fig:1}
\end{figure}

The picture on the left in figure~\ref{fig:1} describes the phase portrait of equation \eqref{eqn1} when the parameter $\lambda=0$. We see that in this case the origin is a globally attracting fixed point and the orbit of any other point spirals towards it. The picture on the right describes the phase portrait of equation \eqref{eqn1} when $\lambda>0$. In this case we see that the origin is still an asymptotically stable fixed point but it is not a global attractor anymore since, for each $\lambda>0$, the circle centered at the origin and radius $1/\lambda$ is a periodic trajectory which attracts uniformly all the points of the unbounded component of its complement and repels all the points of the bounded one except the origin. Notice that for every $\lambda>0$ the disk $B_\lambda$ centered at the origin and radius $1/\lambda$  is a global attractor whose interior agrees with the basin of attraction of $\{0\}$. As a consequence of these remarks, for each $\lambda\in[0,1]$ the flow $\varphi_\lambda$ associated to the corresponding value of $\lambda$ in \eqref{eqn1} is dissipative and, if $K_\lambda$ is a continuation of the global attactor $\{0\}$ of $\varphi_0$, there exists $\lambda_0>0$ such that for each $0<\lambda<\lambda_0$, $K_\lambda=\{0\}$. Therefore, the global attractor $\{0\}$ cannot be continued to a family of global attractors.  
\end{example} 

Example~\ref{bounded} is an instance of parametrized family $\varphi _{\lambda }$ of dissipative flows in $\mathbb{%
R}^2$ with a global attractor $K_{0}$ such that every continuation $K_{\lambda }$ of $K_0$ satisfies that there exists $\lambda_0>0$ such that $\mathcal{A}_\lambda(K_{\lambda })$ is bounded for every $0<\lambda <\lambda_0.$ This motivates the following definition.

\begin{definition}
A parametrized family of dissipative flows $\varphi:M\times\mathbb{R}\to M$ is said to be \emph{coercive} if for any continuation $K_\lambda$ of the global attractor $K_{0}$ of $\varphi_0$,  there exists $\lambda_0>0$ such that  $\mathcal{A}_\lambda%
(K_{\lambda })$ is bounded for every $0<\lambda <\lambda _{0}$. 
\end{definition}

\begin{remark}
Notice that it follows from \cite[Lemma~6.2]{Sal} that if $K_0$ has a continuation $K_\lambda$ satisfying that there exists $\lambda_0>0$ such that  $\mathcal{A}_\lambda%
(K_{\lambda })$ is bounded for every $0<\lambda <\lambda _{0}$, then the family $\varphi_\lambda$ is coercive.
\end{remark}

Before studying coercive families of dissipative flows we will characterize when a global attractor can be continued to a family of global attractors. For this purpose we introduce the following definition.

\begin{definition}
A parametrized family of flows $\varphi_\lambda:M\times\mathbb{R}\to M$ is said to be uniformly dissipative provided that $\omega_\lambda(x)\neq\emptyset$ for each $x\in M$ and $\lambda\in[0,1]$ and the closure of
\[
\Omega=\bigcup_{\lambda\in[0,1]}\Omega(\varphi_\lambda)
\]
is compact.
\end{definition}

\begin{theorem}\label{undis}
Let $\varphi _{\lambda}:M\times\mathbb{R}\to M$ be a parametrized family of dissipative flows with $\lambda\in[0,1]$. Suppose that $K_\lambda$ denotes the global attractor of $\varphi_\lambda$. Then, the family $(K_\lambda)_{\lambda\in[0,1]}$ is a continuation of $K_0$ if and only if the family $(\varphi_\lambda)_{\lambda\in[0,1]}$ is uniformly dissipative.
\end{theorem}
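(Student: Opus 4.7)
I would treat the two directions separately.

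\emph{Forward direction} ($\Rightarrow$). Assuming $(K_\lambda)_{\lambda\in[0,1]}$ is a continuation, the simplified definition of continuation recalled in the introduction gives, for each $\lambda_0\in[0,1]$, an open interval $I_{\lambda_0}\ni\lambda_0$ and a compact set $N_{\lambda_0}$ which is an isolating neighborhood of $K_\mu$ for every $\mu\in I_{\lambda_0}\cap[0,1]$. Extracting a finite subcover $I_{\lambda_1},\dots,I_{\lambda_k}$ of $[0,1]$, the compact set $N:=\bigcup_{j=1}^k N_{\lambda_j}$ satisfies $K_\lambda\subset N$ for every $\lambda$. Since $\Omega(\varphi_\lambda)\subset K_\lambda$, the union $\Omega=\bigcup_\lambda\Omega(\varphi_\lambda)$ lies in $N$, so $\overline{\Omega}$ is compact; non-emptiness of each $\omega_\lambda(x)$ is automatic from dissipativeness, so the family is uniformly dissipative.

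\emph{Reverse direction} ($\Leftarrow$). I would pass to the Alexandrov compactification $M\cup\{\infty\}$; by the remark after Proposition~\ref{gat}, $\{\infty\}$ is a repeller of every $\varphi_\lambda$. Uniform dissipativeness lets me pick a single compact $L\subset M$ with $\overline{\Omega}\subset\mathring L$, and I would show that $V=(M\cup\{\infty\})\setminus\mathring L$ is an isolating neighborhood of $\{\infty\}$ \emph{uniformly} in $\lambda$. This is the crux of the argument: if $A\subset V$ is compact and invariant under $\varphi_\lambda$ and contains some $x\in A\cap M$, then $\omega_\lambda(x)\subset\overline{\Omega(\varphi_\lambda)}\subset\mathring L$ would force the forward orbit of $x$ to leave $V$, contradicting the invariance of $A$; hence $A\subset\{\infty\}$.

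Armed with $V$, the constant family $(\{\infty\})_{\lambda\in[0,1]}$ is a continuation of the repeller $\{\infty\}$. Since $(K_\lambda,\{\infty\})$ is the attractor--repeller pair of the trivially isolated invariant set $M\cup\{\infty\}$, continuation of $\{\infty\}$ propagates to a continuation of its dual attractor $K_\lambda$ by the standard Conley-theoretic duality developed in \cite{Sal}, yielding the desired continuation $(K_\lambda)_{\lambda\in[0,1]}$. The step I expect to be the main obstacle is precisely this last transfer: while $V$ uniformly isolates the repeller, the dual attractor $K_\lambda$ may stick out of $L$ along connecting orbits inside $K_\lambda$, so $L$ itself need not serve as a common isolating neighborhood for the attractors. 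Converting the uniform isolation of $\{\infty\}$ into a uniform isolation of $K_\lambda$ requires slightly enlarging $L$ and invoking an upper semicontinuity property $K_\lambda\to K_{\lambda_0}$, which combines continuous dependence of $\varphi_\lambda$ on $\lambda$ with the uniform absorbing behaviour encoded by $V$.
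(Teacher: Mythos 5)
Your forward direction is correct and is essentially the paper's argument verbatim: extract a finite subcover of parameter intervals, take the union of the corresponding isolating neighborhoods, and observe that it contains $\bigcup_\lambda K_\lambda\supset\Omega$.

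The reverse direction, however, has a genuine gap at exactly the point you flag as ``the main obstacle''. Showing that $V=(M\cup\{\infty\})\setminus\mathring L$ uniformly isolates $\{\infty\}$ is fine, but it only gives a continuation of the \emph{repeller}; the whole content of the theorem is the passage to the dual attractor, and your proposed resolution --- ``slightly enlarging $L$ and invoking an upper semicontinuity property $K_\lambda\to K_{\lambda_0}$'' --- is circular, because upper semicontinuity of $\lambda\mapsto K_\lambda$ is precisely equivalent to the continuation statement being proved (for a dissipative flow $K_\lambda$ is the maximal compact invariant set, so $K_\lambda$ is isolated by any compact neighborhood containing it, and continuation reduces exactly to $K_\lambda\subset\mathring N_{\lambda_0}$ for $\lambda$ near $\lambda_0$). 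Nor can one simply cite ``duality'' in \cite{Sal}: what the continuation machinery produces a priori is the maximal invariant set in some prescribed neighborhood, and the issue is to identify that set with the global attractor $K_\lambda$ and, in particular, to rule out that $K_\lambda$ (which contains connecting orbits far outside $\overline{\Omega}$) escapes every fixed compact set as $\lambda\to\lambda_0$.

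The paper closes this gap with a specific application of \cite[Lemma~6.4]{Sal} to the attractor--repeller decomposition $(K_{\lambda_0},\{\infty\})$ of the whole space $M\cup\{\infty\}$: fix an isolating neighborhood $N_{\lambda_0}$ of $K_{\lambda_0}$ and a compact neighborhood $W$ of $N_{\lambda_0}\cup\Omega$ (here is where the \emph{uniform} $\Omega$ enters), and set $U=\overline{(M\cup\{\infty\})\setminus W}$. For $\lambda$ near $\lambda_0$ the lemma yields an attractor--repeller pair $(A_\lambda,R_\lambda)$, where $A_\lambda$ is the maximal invariant set in $N_{\lambda_0}$ and $R_\lambda$ the maximal invariant set in $U$. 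Since $U\cap M$ misses $\overline{\Omega}\supset\overline{\Omega(\varphi_\lambda)}$, every point of $U\cap M$ leaves $U$ in forward time, so $R_\lambda=\{\infty\}$; consequently $A_\lambda$ attracts all of $M$ and must equal the global attractor $K_\lambda$, which is therefore contained in, and isolated by, $N_{\lambda_0}$. In other words, the upper semicontinuity you want to invoke is obtained as a \emph{consequence} of the attractor--repeller continuation lemma together with the forced identification $R_\lambda=\{\infty\}$; some argument of this kind (or an explicit uniform repelling structure at $\infty$, e.g.\ an isolating block or Lyapunov function valid for all nearby $\lambda$) is needed, and your sketch does not supply it.
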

\begin{proof}
Suppose that the family of global attractors $(K_\lambda)_{\lambda\in [0,1]}$ is a continuation of the global attractor $K_0$ of $\varphi_0$. Then, for each $\lambda_0\in[0,1]$ there exists an isolating neighborhood $N_{\lambda_0}$ of $K_{\lambda_0}$ and $\delta>0$ such that $N_{\lambda_0}$ is an isolating neighborhood of $K_\lambda$ for $\lambda\in (\lambda_0-\delta,\lambda_0+\delta)\cap [0,1]$. 

Using this fact and the compactness of $[0,1]$ it follows that there exist $\lambda_1,\ldots,\lambda_k\in[0,1]$ such that, if we call $K=\cup_{\lambda\in[0,1]}K_\lambda$, 
\[
\overline{K}\subset\bigcup_{i=1}^k N_{\lambda_k}.
\]
 As a consequence, $\overline{K}$ is compact. Moreover, $\omega_\lambda(x)\neq\emptyset$ for each $\lambda\in[0,1]$ and $x\in M$ since every flow in the family is dissipative and the closure of $\Omega$ is compact being contained in $\overline{K}$. Therefore, the family $(\varphi_\lambda)_{\lambda\in[0,1]}$ is uniformly dissipative. 
 
Conversely, suppose that the family $(\varphi_\lambda)_{\lambda\in[0,1]}$ is uniformly dissipative. We see that the family of global attractors $(K_\lambda)_{\lambda\in[0,1]}$ is a continuation of the global attractor $K_0$. For this purpose we consider the family of flows extended to the Alexandrov compactification $M\cup\{\infty\}$ leaving $\infty$ fixed. Let $\lambda_0\in[0,1]$, $N_{\lambda_0}$ an isolating neighborhood of $K_{\lambda_0}$ and $W$ a compact neighborhood of $N_{\lambda_0}\cup\Omega$. Then, $U=\overline{(M\cup\{\infty\})\setminus W}$ is an isolating neighborhood of $\{\infty\}$ for the flow $\varphi_{\lambda_0}$. Since $(K_{\lambda_0},\{\infty\})$ is an attractor-repeller decomposition, it follows from \cite[Lemma~6.4]{Sal} that there exists $\delta>0$ such that for each $\lambda\in (\lambda_0-\delta,\lambda_0+\delta)\cap[0,1]$ the pair $(A_\lambda,R_\lambda)$, where $A_\lambda$ is the isolated invariant set having $N_{\lambda_0}$ as isolating neighborhood and $R_{\lambda}$ is the maximal invariant set in $W$ for $\lambda\in(\lambda_0-\delta,\lambda_0+\delta)\cap[0,1]$, is an attractor-repeller decomposition. Besides, from the choice of $W$ it easily follows that $R_\lambda=\{\infty\}$ and, hence, $A_\lambda$ must agree with the global attractor $K_\lambda$ for $\lambda\in(\lambda_0-\delta,\lambda_0+\delta)\cap [0,1]$. Thus the result follows. 
\end{proof}

\begin{example}\label{lorenz}
An important example of a global attractor is provided by
the Lorenz equations 
\[
\begin{cases}
\dot{x}=\sigma (y-x) \\
\dot{y}=rx-y -xz \\
\dot{z}=xy-bz
\end{cases}
\]
where $\sigma ,r$ and $b$ are three real positive parameters. E.N. Lorenz
proved in \cite{Lor} that for every value of $\sigma ,r$ and $b$ there exists a global
attractor of zero volume for the flow associated to these equations. This
attractor should not be confused with the famous Lorenz attractor, which is
a proper subset of the global attractor.

If we fix $\sigma $ and $b$ we obtain a family of flows $\varphi _{r}:%
\mathbb{R}^{3}\times \mathbb{R\rightarrow R}^{3}$ corresponding to the
Lorenz equations for the different values of $r$. Each of these flows has a
global attractor $K_{r}$. We shall show that the family $\varphi _{r}$ is
uniformly dissipative and, hence, for a given $r_{0}$ the family of global
attractors $K_{r}$ is a continuation of $K_{r_{0}}$.

Sparrow studied in \cite{Spa} the function

\[
V=rx^{2}+\sigma y^{2}+\sigma (z-2r)^{2} 
\]%
which is a Lyapunov function for the flow $\varphi _{r}$. By using this
function he was able to prove that $K_{r}$ lies in a ball $B_{r}$ centered
at $0$ and with radius $O(r)$, such that $O(r)$ depends continuously on $r.$
Hence, if we consider an arbitrary $r_{0}$ and an interval $[c,d]$
containing $r_{0}$, we have that the set $C=\overline{\cup _{c\leq r\leq
d}B_{r}}$ is compact and that $\emptyset\neq\omega _{r}(x)\subset C$ for every $x\in 
\mathbb{R}^{3}$ and every $r$ with $c\leq r\leq d$. Hence the family of
Lorenz flows $\varphi _{r}$ is uniformly dissipative for any interval $[c,d]$
of parameters and the corresponding family $(K_{r})_{r\in[a,b]}$ of global attractors is
a continuation of $K_{r_{0}}$.
\end{example}

From now on we will focus on families of flows in $\mathbb{R}^n$.  An interesting fact, of topological and dynamical significance, takes place
for coercive families. This result provides a graphic characterization of
coercive families.

\begin{theorem}\label{coercive}
Let $\varphi _{\lambda }$, with $\lambda \in [0,1]$, be a a coercive
family of dissipative flows in $\mathbb{R}^{n}$. We denote by $K_{0}$ the
global attractor of $\varphi _{0}$ and by $K_{\lambda }$ a continuation of $K_0$.
Then there exists $\lambda _{0}>0$ such that for every $\lambda $ with $%
0<\lambda <\lambda _{0}$ there is an isolated invariant compactum $%
C_{\lambda }$ in $\mathbb{R}^{n}\setminus K_{\lambda }$ such that
\begin{enumerate}
\item[i)] $C_{\lambda }$ separates $\mathbb{R}^{n}$ into two components and $%
K_{\lambda }$ lies in the bounded component.

\item[ii)] $C_{\lambda }$ has the shape of $S^{n-1}$

\item[iii)] $C_{\lambda }$ attracts uniformly all the points of the unbounded component and
repels all the points of the bounded one which are not in $K_\lambda$.

\item[iv)] $\diam(C_{\lambda })\rightarrow \infty $ when $\lambda \rightarrow 0$, where $\diam(C_{\lambda})$ denotes the diameter of $C_\lambda$.
\end{enumerate}

Moreover, the existence of such a $C_{\lambda }$ for $0<\lambda<\lambda_0$ is sufficient for the
family to be coercive.
\end{theorem}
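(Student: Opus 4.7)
The plan is to work in the Alexandrov compactification $S^n=\mathbb{R}^n\cup\{\infty\}$, exploit the attractor--repeller structure there, and manufacture $C_\lambda$ as a nested dual attractor. Since each $\varphi_\lambda$ is dissipative, $\{\infty\}$ is a repeller by the remark following Proposition~\ref{gat}; for $\lambda$ small the continuation $K_\lambda$ is an attractor with the shape of a point, and by coercivity $\mathcal{A}_\lambda(K_\lambda)$ is bounded, so $W_\lambda:=S^n\setminus \mathcal{A}_\lambda(K_\lambda)$ is a compact invariant set which is precisely the dual repeller of $K_\lambda$ in $S^n$. The point $\{\infty\}$ is still a repeller for the flow restricted to $W_\lambda$, and I define $C_\lambda$ to be its dual attractor there. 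By construction $C_\lambda\subset W_\lambda\setminus\{\infty\}=\mathbb{R}^n\setminus\mathcal{A}_\lambda(K_\lambda)\subset\mathbb{R}^n\setminus K_\lambda$, and any $W_\lambda$-isolating neighborhood of $C_\lambda$ can be thickened to an isolating neighborhood of $C_\lambda$ in $\mathbb{R}^n$, because an orbit that strays into $\mathcal{A}_\lambda(K_\lambda)$ is swept to $K_\lambda$ and leaves the thickening.

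Items (iii), (i), (iv) then follow by direct bookkeeping. For (iii), the attractor--repeller decomposition $(C_\lambda,\{\infty\})$ of $W_\lambda$ gives uniform attraction of $W_\lambda\setminus\{\infty\}=\mathbb{R}^n\setminus \mathcal{A}_\lambda(K_\lambda)$ to $C_\lambda$, while points in the bounded component of $\mathbb{R}^n\setminus C_\lambda$ lie in $\mathcal{A}_\lambda(K_\lambda)\setminus K_\lambda$ and so are pushed away from $C_\lambda$ towards $K_\lambda$. For (i), note that $\partial\mathcal{A}_\lambda(K_\lambda)\subset C_\lambda$, because the boundary is compact invariant and no backward orbit in it can escape to $\{\infty\}$; hence the connected bounded open set $\mathcal{A}_\lambda(K_\lambda)$ sits inside one component of $\mathbb{R}^n\setminus C_\lambda$, which must be bounded and contains $K_\lambda$, and once (ii) is known Alexander duality forces exactly two components. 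For (iv), fix $R>0$ and an isolating block $N$ of $K_0$ inside $\mathcal{A}_0(K_0)$; since $K_0$ attracts $\overline{B_R}$ under $\varphi_0$ there is $T$ with $\varphi_0(\overline{B_R},T)\subset\operatorname{int}N$, and continuity of the family on compacta together with the fact that $N$ is a trapping block for $K_\lambda$ at small $\lambda$ yields $\overline{B_R}\subset\mathcal{A}_\lambda(K_\lambda)$. Therefore $C_\lambda$ lies outside $\overline{B_R}$ while surrounding $K_\lambda$ by (i), so $\diam(C_\lambda)\geq 2R$.

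The main obstacle is (ii). Applying to $C_\lambda\subset W_\lambda$ the shape equivalence between an attractor and its region of attraction shows that $C_\lambda$ has the shape of $W_\lambda\setminus\{\infty\}=\mathbb{R}^n\setminus\mathcal{A}_\lambda(K_\lambda)$. Alexander duality in $S^n$, combined with the fact that $\mathcal{A}_\lambda(K_\lambda)$ is a contractible open ANR (shape of a point for an open subset of $\mathbb{R}^n$ means genuine contractibility), gives $\check{H}^*(W_\lambda)=0$. Since $\{\infty\}$ is a repelling fixed point it lies in the interior of $W_\lambda$ in $S^n$, so excision identifies $\check{H}^*(W_\lambda,W_\lambda\setminus\{\infty\})$ with $H^*(\mathbb{R}^n,\mathbb{R}^n\setminus\{0\})$, which is nonzero only in degree $n$; the long exact sequence of the pair then produces $\check{H}^*(W_\lambda\setminus\{\infty\})\cong\check{H}^*(S^{n-1})$. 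Upgrading this cohomological identification to an actual shape equivalence $C_\lambda\simeq_{\mathrm{sh}}S^{n-1}$ is the delicate point; for this I would appeal to the shape-rigidity of movable continua with the \v{C}ech cohomology of a sphere, using that $C_\lambda$, being an attractor in an ANR, is an FANR whose shape type is pinned down by its \v{C}ech cohomology together with its shape fundamental pro-group.

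The converse is immediate: if $C_\lambda$ with the listed properties exists for every $\lambda\in(0,\lambda_0)$, then by (iii) every point of the unbounded component of $\mathbb{R}^n\setminus C_\lambda$ has $\omega$-limit in $C_\lambda$, hence disjoint from $K_\lambda$, and is therefore not in $\mathcal{A}_\lambda(K_\lambda)$; by (i) the connected open set $\mathcal{A}_\lambda(K_\lambda)$ lies in the bounded component of $\mathbb{R}^n\setminus C_\lambda$ and so is itself bounded. Hence the family is coercive.
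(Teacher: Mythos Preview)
Your construction of $C_\lambda$ as the dual attractor of $\{\infty\}$ inside $W_\lambda=S^n\setminus\mathcal{A}_\lambda(K_\lambda)$ is correct and produces exactly the set $C_\lambda=\mathbb{R}^n\setminus(\mathcal{A}_\lambda(K_\lambda)\cup\mathcal{R}_\lambda(\{\infty\}))$ that the paper uses; your handling of (iii), (iv) and the converse is sound and close in spirit to the paper's.

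The genuine gap is in (ii), and since your argument for (i) is made to depend on (ii), it is affected as well. You correctly obtain a shape equivalence $C_\lambda\hookrightarrow W_\lambda\setminus\{\infty\}=\mathbb{R}^n\setminus\mathcal{A}_\lambda(K_\lambda)$ and compute its \v{C}ech cohomology to be that of $S^{n-1}$. But the step you yourself call ``delicate''---upgrading this to an actual shape equivalence with $S^{n-1}$ via some rigidity principle for FANRs---is not a theorem. Knowing the \v{C}ech cohomology of an FANR, even together with its shape pro-fundamental group (which you have not computed), does not determine its shape type. Concretely, nothing in your argument rules out $\mathbb{R}^n\setminus\mathcal{A}_\lambda(K_\lambda)$ having a complicated homotopy type that merely shares the cohomology of a sphere; complements of contractible bounded open sets in $\mathbb{R}^n$ can be badly behaved.

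The paper avoids this obstacle by proving a \emph{different} shape equivalence, $C_\lambda\hookrightarrow\mathbb{R}^n\setminus K_\lambda$ rather than $\mathbb{R}^n\setminus\mathcal{A}_\lambda(K_\lambda)$. This is done explicitly: Lyapunov functions for the attractor $K_\lambda$ and for the repeller $\{\infty\}$ are glued into a single function $F_\lambda:\mathbb{R}^n\cup\{\infty\}\to[0,\infty]$ with $C_\lambda=F_\lambda^{-1}(\infty)$, and the super-level sets $F_\lambda^{-1}[a,\infty]$ give a neighborhood basis of $C_\lambda$ in $\mathbb{R}^n\setminus K_\lambda$, each a deformation retract of the next along the flow. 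The payoff is that $\mathbb{R}^n\setminus K_\lambda$ is easy to identify: a closed ball $B$ isolates $K_0$ and hence $K_\lambda$ for small $\lambda$, and flowing $B$ forward yields a neighborhood basis of $K_\lambda$ consisting of balls, so $K_\lambda$ is cellular and $\mathbb{R}^n\setminus K_\lambda$ is homeomorphic to $\mathbb{R}^n\setminus\{\mathrm{point}\}\simeq S^{n-1}$. Replacing the open set $\mathcal{A}_\lambda(K_\lambda)$ by the cellular compactum $K_\lambda$ is exactly the move that makes the shape identification work.
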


\begin{proof}
Suppose that $\varphi _{\lambda }$ is a coercive family of flows in $\mathbb{%
R}^{n}$. Then, if we extend this family to $\mathbb{R}^{n}\cup \{\infty \}$
leaving $\infty$ fixed, there are no connecting orbits between the attractor $K_{\lambda
}$ and $\{\infty\}$ when $0<\lambda <\lambda _{0}$ and, as a consequence, $%
\mathcal{A}_\lambda(K_{\lambda })$ and $\mathcal{R}_\lambda(\{\infty \})$ are disjoint open
sets. Besides, $\mathcal{R}_\lambda(\{\infty\})$ is connected by \cite[Theorem~3.6]{KapRod} and, since   there exists $0<\lambda_1\leq 1$ (we may assume that $\lambda_1=\lambda_0$) such that $K_\lambda$ is connected, \cite[Theorem~3.6]{KapRod} also ensures that $\mathcal{A}_\lambda(K_\lambda)$ is connected for $0<\lambda_1\leq 1$. Since $\mathbb{R}^{n}\cup \{\infty \}$ is connected, then
\[
C_{\lambda }=\mathbb{R}^{n}\setminus(\mathcal{A}_\lambda(K_{\lambda })\cup \mathcal{R}_\lambda%
(\{\infty \})) 
\]%
is non-empty, compact and invariant. It is clear that it is also isolated
since every point near $C_{\lambda }$ must be either in $\mathcal{A}_{\lambda}%
(K_{\lambda })$ or in $\mathcal{R}_\lambda(\{\infty \})$. Notice that from the
previous discussion it also follows that $C_{\lambda }$ decomposes $\mathbb{R%
}^{n}$ into two connected components and that $K_{\lambda }$ is contained in
the bounded one. Besides, it is clear that $C_{\lambda }$ must attract every
point in $\mathcal{R}_{\lambda}(\{\infty \})$ (the unbounded component) and repel
every point in $\mathcal{A}_{\lambda}(K_{\lambda })$ (the bounded one) which is not in $K_\lambda$. Now, we will
see that $\mathbb{R}^{n}\setminus K_{\lambda }$ has the homotopy type of $%
S^{n-1}$ and that the inclusion $i_\lambda:C_{\lambda }\hookrightarrow \mathbb{%
R}^{n}\setminus K_{\lambda }$ is a shape equivalence. To see that choose a closed
ball $B$ isolating $K_{0}$. Then, for small values of the parameter  $B$ is
an isolating neighborhood of $K_{\lambda }$ as well, and it is contained in $%
\mathcal{A}_\lambda(K_{\lambda })$ (in fact, we may assume that this happens for $0<\lambda<\lambda_0$). As a consequence $K_{\lambda }$ has a basis of
neighborhoods consisting of balls (obtained as images of $B$ by the flow $%
\varphi _{\lambda }$), i.e., it is cellular. Then, by \cite[Exercise~2, pg. 41]{Daver} $\mathbb{R}%
^{n}\setminus K_{\lambda }$ is homeomorphic to $\mathbb{R}^{n}\setminus\{p\}$, where $p\in \mathbb{R}^n$, and hence it has the homotopy type of $S^{n-1}$.

Let us see that $i_\lambda:C_{\lambda }\hookrightarrow \mathbb{R}^{n}\setminus K_{\lambda }$
is a shape equivalence. To do that notice that, since $K_{\lambda }$ is an
attractor for $0<\lambda<\lambda_0$, it posseses a Lyapunov function $\Phi_\lambda :\overline{\mathcal{A}_\lambda%
(K_{\lambda })}\rightarrow  [0,\infty ]$ which is strictly decreasing
on orbits of $\varphi_\lambda$ contained in $\mathcal{A}_\lambda(K_{\lambda })\setminus K_{\lambda }$ and such
that $\Phi_\lambda|_{K_{\lambda }}=0$ and $\Phi_\lambda|_{\partial \mathcal{A}_\lambda(K_{\lambda
})}=\infty $. Similarly, since $\{\infty \}$ is a repeller for every $\lambda$, there exist a
Lyapunov function $\Psi_\lambda :\overline{\mathcal{R}_\lambda(\{\infty \})}\rightarrow
[0,\infty ]$ such that $\Psi_\lambda $ is strictily increasing on orbits of $\varphi_\lambda$
contained in $\mathcal{R}_\lambda(\{\infty \})\setminus\{\infty \}$, $\Psi_\lambda (\infty )=0$ and $\Psi_\lambda |_{\partial \mathcal{R}_\lambda(\{\infty \})}=\infty $. Using this fact we define, for each $0<\lambda<\lambda_0$
a function $F_\lambda:\mathbb{R}^{n}\cup \{\infty \}\rightarrow  [0,\infty]$ by
\[
F_\lambda(x)=
\begin{cases}
\Phi_\lambda (x)&\mbox{if $x\in \mathcal{A}(K_{\lambda })$} \\
\Psi_\lambda (x)&\mbox{if $x\in \mathcal{R}(\{\infty \})$} \\
0&\mbox{if $x\in K_{\lambda }\cup \{\infty \}$} \\
\infty& \mbox{otherwise.}
\end{cases}
\]
It is clear that $F_\lambda$ is continuous for each $0<\lambda<\lambda_0$ and 
\[
C_{\lambda }=F_\lambda^{-1}(\infty ). 
\]

For every $a>0$ the set $N_{a}^\lambda=F_\lambda^{-1}[a,\infty ]$ is a compact neighborhood
of $C_{\lambda }$. On the other hand, for each $x\in \mathbb{R}%
^{n}\setminus(K_{\lambda }\cup \mathring{N}^\lambda_{a})$ there exists a unique time $%
t^\lambda_{a}(x)\in \mathbb{R}$ such that $F_\lambda(\varphi_\lambda(x,t^\lambda_{a}(x)))=a$ and for each $0<\lambda<\lambda_0$ the function $t^\lambda_{a}:\mathbb{R}^{n}\setminus(K_{\lambda }\cup \mathring{N}^\lambda_{a})\rightarrow \mathbb{R}$
is continuous. Using this fact we can define the retraction $r^\lambda_{a}:
\mathbb{R}^{n}\setminus K_{\lambda }\rightarrow N^\lambda_{a}$ as follows:
\[
r^\lambda_a(x)=
\begin{cases}
\varphi_\lambda(x,t^\lambda_{a}(x))&\mbox{if $x\in \mathbb{R}^{n}\setminus(K_{\lambda }\cup N^\lambda_{a})$}\\
x &\mbox{if $x\in N^\lambda_{a}$.}
\end{cases}
\]
It is straightforward to see that $r^\lambda_{a}$ is a deformation retraction
from $\mathbb{R}^{n}\setminus K_{\lambda }$ onto $N^\lambda_{a}$. Besides, if $a'>a$ it
is also easy to see that $r^\lambda_{a'}|_{N^\lambda_{a}}:N^\lambda_{a}\rightarrow N^\lambda_{a'
}$ is a deformation retraction as well. Since the family $\{N^\lambda_{a}\}_{a>0}$
is a basis of neighborhoods of $C_{\lambda }$ it follows that the inclusion $%
i_\lambda:C_{\lambda }\hookrightarrow \mathbb{R}^{n}\setminus K_{\lambda }$ is a shape
equivalence.

Moreover, for each $0<\lambda<\lambda_0$, the invariant compactum $C_\lambda$ contains the compact set $\partial\mathcal{A}_\lambda(K_\lambda)$ and, as a consequence, $\diam(C_\lambda)\geq\diam(\partial\mathcal{A}_\lambda(K_\lambda))=\diam(\overline{\mathcal{A}_\lambda(K_\lambda)})$. Besides, since $K_0$ is a global attractor it follows that $\diam(\overline{\mathcal{A}_\lambda(K_\lambda)})$ must go to infinity when $\lambda$ goes to zero. Therefore, $\diam(C_\lambda)\to\infty$  when $\lambda\to 0$. 

Conversely, assume now that there exists such a $C_{\lambda }$. Since $C_{\lambda }$
is an isolated invariant compactum which decomposes $\mathbb{R}^{n}$ into
two components with $K_{\lambda }$ lying in the bounded one it follows that $%
\mathcal{A}_\lambda(K_{\lambda })$ is also contained in the bounded component.

\end{proof}

\section{The general case}\label{sec:2}

In view of the previous results, it is interesting to study in all its
generality the mechanism which produces the bifurcation global to non-global
in families of dissipative flows. With this aim we introduce the following
definition.

\begin{definition}
Let $\varphi _{\lambda }:\mathbb{R}^{n}\times \mathbb{R\rightarrow }\mathbb{R%
}^{n},$ with $\lambda \in [0,1]$, be a parametrized family of
dissipative flows. The family is said to be \emph{polar} if it has arbitrarily
large bounded trajectories. More precisely: for every $L>0$ (arbitrarily
large) there is a $\lambda _{0}>0$ such that for every $0<\lambda <\lambda
_{0}$ there is a bounded trajectory $\gamma _{\lambda }$ of $\varphi
_{\lambda }$ and a $t_{\lambda }<0$ such that $\|\gamma _{\lambda }(t)\|>L$
for every $t$ with $-\infty <t<t_{\lambda }$.
\end{definition}

\begin{remark}\label{large}
Obviously, if $K_\lambda$ is a continuation of the global attractor $K_0$ of $\varphi_0$, for $L$ sufficiently large, $\gamma _{\lambda }$ lies in $\mathbb{%
R}^{n}\setminus K_{\lambda }$.
\end{remark}

The following proposition makes it clear that polarity is a key notion
regarding the transition global to non-global.

\begin{proposition}\label{polar}
Let $\varphi _{\lambda }:\mathbb{R}^{n}\times \mathbb{R\rightarrow }\mathbb{R%
}^{n},$ with $\lambda \in \lbrack 0,1]$, be a parametrized family of
dissipative flows. Then the family is polar if and only if for every continuation $K_\lambda$ of the global attractor $K_0$ of $\varphi_0$ there exists $%
\lambda _{0}$ such that $K_{\lambda }$ is not a global attractor for
every $0<\lambda <\lambda _{0}$.
\end{proposition}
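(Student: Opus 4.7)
The plan is to work in the Alexandrov compactification $\mathbb{R}^n\cup\{\infty\}$, in which $\{\infty\}$ is a repeller by dissipativeness and $(K_0,\{\infty\})$ is an attractor-repeller decomposition since $K_0$ is global. I fix a continuation $K_\lambda$ of $K_0$ and, using an isolating block $N_0$ for $K_0$ which by \cite[Lemma~6.2]{Sal} continues to isolate $K_\lambda$ for all $\lambda\in[0,\delta_0)$, I pick $L_0>0$ with $K_\lambda\subset B(0,L_0)$ for every such $\lambda$.

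For the direct implication, assume the family is polar and apply the definition of polarity to $L=L_0+1$. This furnishes, for each sufficiently small $\lambda>0$, a bounded trajectory $\gamma_\lambda$ with $\|\gamma_\lambda(t)\|>L_0+1$ for every $t<t_\lambda$. Its $\alpha$-limit $\omega^*_\lambda(\gamma_\lambda(0))$ is then a non-empty compact invariant set contained in $\{\|x\|\geq L_0+1\}$ and therefore disjoint from $K_\lambda$. Picking any $y$ in this $\alpha$-limit, the orbit of $y$ stays inside it, so $\omega_\lambda(y)$ is a non-empty compact invariant subset of $\{\|x\|\geq L_0+1\}$ that is also disjoint from $K_\lambda$. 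Hence $y\notin\mathcal{A}_\lambda(K_\lambda)$, and $K_\lambda$ is not a global attractor.

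For the converse, I argue by contrapositive: assuming the family is not polar, I exhibit a sequence $\lambda_n\to 0^+$ at which the continuation $K_{\lambda_n}$ is a global attractor, contradicting the hypothesis. Failure of polarity delivers an $L>0$ and a sequence $\lambda_n\to 0^+$ such that $\varphi_{\lambda_n}$ admits no non-empty compact invariant subset of $\{\|x\|>L\}$, since any point of such a set would trace a bounded orbit witnessing polarity at $\lambda_n$. Choose $L'>\max(L,L_0)$; then the set $V=\{\|x\|\geq L'\}\cup\{\infty\}$ is a compact neighborhood of $\{\infty\}$ disjoint from $K_0$, and the inclusion $\{\|x\|\geq L'\}\subset\{\|x\|>L\}$ guarantees that no non-empty compact invariant subset of $\varphi_{\lambda_n}$ lies in $V\setminus\{\infty\}$. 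Moreover $V$ isolates $\{\infty\}$ for $\varphi_0$, because any orbit of $\varphi_0$ confined to $V$ in $\mathbb{R}^n$ would have $\omega$-limit in $V\cap K_0=\emptyset$, contradicting dissipativeness. Thus $N_0$ and $V$ are isolating neighborhoods of the attractor-repeller decomposition $(K_0,\{\infty\})$ of the extended $\varphi_0$.

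Applying \cite[Lemma~6.4]{Sal} exactly as in the proof of Theorem~\ref{undis}, I obtain $\delta>0$ such that for every $\lambda\in[0,\delta)$ the pair $(K_\lambda,R_\lambda)$ is an attractor-repeller decomposition of the extended $\varphi_\lambda$, where $K_\lambda$ is the continuation of $K_0$ isolated by $N_0$ and $R_\lambda$ is the maximal invariant subset of $V$ under $\varphi_\lambda$. For $\lambda_n\in(0,\delta)$, the choice of $L'$ forces $R_{\lambda_n}=\{\infty\}$: any orbit in $R_{\lambda_n}\cap\mathbb{R}^n$ would produce, via its $\omega$-limit, a non-empty compact invariant subset of $\{\|x\|\geq L'\}\subset\{\|x\|>L\}$, ruled out by our assumption. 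Consequently $(K_{\lambda_n},\{\infty\})$ is an attractor-repeller pair and $K_{\lambda_n}$ is the global attractor of $\varphi_{\lambda_n}$, giving the desired contradiction. I expect the most delicate step to be this translation from polarity to the existence of compact invariant sets far from the origin, which relies on combining dissipativeness with $\alpha$-limit and $\omega$-limit arguments to rule out invariant orbits that could escape to infinity in backward time.
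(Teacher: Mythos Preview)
Your proof is correct. The forward direction is essentially the paper's argument, only made more explicit: the paper simply invokes Remark~\ref{large} to place the bounded trajectory $\gamma_\lambda$ in $\mathbb{R}^n\setminus K_\lambda$ and concludes immediately (using that a global attractor contains every bounded orbit), while you spell out the $\alpha$-limit/$\omega$-limit reasoning.

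For the converse you take a genuinely different route. The paper argues \emph{directly}: given that $K_\lambda$ is not global for $0<\lambda<\lambda_0$, it picks a bounded orbit $\gamma_\lambda\subset\mathcal{A}_\lambda(K_\lambda)\setminus K_\lambda$, observes that $\omega^*_\lambda(\gamma_\lambda)\subset\partial\mathcal{A}_\lambda(K_\lambda)$, and then uses that $\partial\mathcal{A}_\lambda(K_\lambda)$ drifts to $\infty$ as $\lambda\to 0$ (since $K_0$ is global) to verify the polarity condition. You instead argue by contrapositive and recycle the mechanism of Theorem~\ref{undis}: from the failure of polarity you extract an $L$ and a sequence $\lambda_n\to 0$ along which $\{\|x\|>L\}$ carries no compact invariant set, build a common isolating neighborhood $V$ of $\{\infty\}$ in the compactification, and invoke \cite[Lemma~6.4]{Sal} to continue the attractor--repeller pair $(K_0,\{\infty\})$ and force $R_{\lambda_n}=\{\infty\}$, whence $K_{\lambda_n}$ is global. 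Your approach is heavier on machinery but makes every step fully explicit; the paper's argument is shorter and more elementary but leaves the claim that $\partial\mathcal{A}_\lambda(K_\lambda)\to\infty$ to the reader.
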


\begin{proof}
If $\varphi _{\lambda }$ is polar then, by Remark~\ref{large}, for every $\lambda $
with $0<\lambda <\lambda _{0}$ there is a bounded trajectory $\gamma
_{\lambda }$ of $\varphi _{\lambda }$ contained in $\mathbb{R}%
^{n}\setminus K_{\lambda }$ and, thus, $K_{\lambda }$ is not a global attractor. Conversely, if $%
K_{\lambda }$ is not a global attractor for every $\lambda $ with $0<\lambda <\lambda
_{0}$, then there exists at least a bounded orbit $\gamma _{\lambda }$ in $%
\mathcal{A}_\lambda(K_{\lambda })\setminus K_{\lambda }$. The negative omega-limit of $\gamma
_{\lambda }$ must be contained in $\partial \mathcal{A}_\lambda(K_{\lambda })$,
which is a closed set all whose points are arbitrarily close to $\infty $ when $\lambda
\rightarrow 0$. This implies that the orbits $\gamma _{\lambda }$ satisfy
the requirements of the definition of polarity when $\lambda _{0}$ is
sufficiently small.
\end{proof}

The following result describes the overall picture of polar families of dissipative flows.

\begin{theorem}\label{general}
If $\varphi _{\lambda }:\mathbb{R}^{n}\times \mathbb{R\rightarrow }\mathbb{R}%
^{n},$ with $\lambda \in \lbrack 0,1]$, is a polar family of dissipative
flows then there exists $\lambda _{0}>0$ such that for every $\lambda $ with $%
0<\lambda <\lambda _{0}$ the maximal invariant compactum lying in $\mathbb{R}^{n}\setminus K_{\lambda }$ for the flow $\varphi _{\lambda }$, which we denote
by $C_{\lambda }$, is non-empty, isolated and its cohomological Conley index is trivial in every dimension.  Moreover, the family $\varphi _{\lambda }$ is
coercive if and only if $C_{\lambda }$ has the shape of $S^{n-1}.$
\end{theorem}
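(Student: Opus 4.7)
My plan is to identify $C_\lambda$ with the dual repeller of $K_\lambda$ inside the global attractor of $\varphi_\lambda$, so that non-emptyness, isolation and the trivial cohomological Conley index all follow from the associated attractor-repeller long exact sequence. The coercivity characterization is then handled by invoking Theorem~\ref{coercive} in one direction, and by a shape argument combining Alexander duality with \cite[Theorem~3.6]{KapRod} in the other.

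First I would fix $\lambda_0>0$ small enough that $K_\lambda$ is an attractor with the shape of a point and, by Proposition~\ref{polar}, is not a global attractor of $\varphi_\lambda$. Let $K'_\lambda$ denote the global attractor of $\varphi_\lambda$ (Proposition~\ref{gat}), so that $K_\lambda\subsetneq K'_\lambda$. Any compact invariant $L\subset\mathbb{R}^n\setminus K_\lambda$ is bounded, hence contained in $K'_\lambda$; since $\omega_\lambda(x)\subset L$ for $x\in L$ and $L$ is disjoint from $K_\lambda$, it follows that $L\cap\mathcal{A}_\lambda(K_\lambda)=\emptyset$. The reverse inclusion being immediate, $C_\lambda=K'_\lambda\setminus\mathcal{A}_\lambda(K_\lambda)$ is the dual repeller of $K_\lambda$ in $K'_\lambda$; it is non-empty because otherwise every point of $K'_\lambda$, and consequently (via the global attraction of $K'_\lambda$ together with the stability of $K_\lambda$) every point of $\mathbb{R}^n$, would have its omega-limit inside $K_\lambda$, contradicting that $K_\lambda$ is not a global attractor. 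Isolation is witnessed by $N^*=\overline{B\setminus\mathring{N_0}}$, where $N_0\subset\mathcal{A}_\lambda(K_\lambda)$ is a closed ball isolating $K_\lambda$ and $B$ is a closed ball containing $K'_\lambda$ in its interior.

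Applying the attractor-repeller long exact sequence associated with the Morse decomposition $(K_\lambda,C_\lambda)$ of $K'_\lambda$,
\[
\cdots\to CH^{k-1}(K_\lambda)\to CH^k(C_\lambda)\to CH^k(K'_\lambda)\to CH^k(K_\lambda)\to CH^{k+1}(C_\lambda)\to\cdots,
\]
the fact that $K_\lambda$ and $K'_\lambda$ are both attractors in $\mathbb{R}^n$ with the shape of a point reduces $CH^*(K_\lambda)$ and $CH^*(K'_\lambda)$ to $\mathbb{Z}$ concentrated in degree zero. The connecting map between them is induced by inclusion of connected trapping regions and is therefore an isomorphism, forcing $CH^*(C_\lambda)=0$.

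For the coercivity statement, the forward implication follows from Theorem~\ref{coercive} once one verifies that the set $\mathbb{R}^n\setminus(\mathcal{A}_\lambda(K_\lambda)\cup\mathcal{R}_\lambda(\{\infty\}))$ appearing there coincides with the present $C_\lambda$; for this, a point $x\in C_\lambda$ has $\omega^*_\lambda(x)\subset C_\lambda\subset\mathbb{R}^n$ and hence cannot belong to $\mathcal{R}_\lambda(\{\infty\})$. For the converse, suppose $C_\lambda$ has the shape of $S^{n-1}$. By Alexander duality, $\mathbb{R}^n\setminus C_\lambda$ splits into a bounded component $V_1$ and an unbounded one $V_2$, and a routine Alexander-duality calculation in $S^n$ shows that $\overline{V_1}$ has the \v{C}ech cohomology of a point. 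The key claim is $K_\lambda\subset V_1$: if instead $K_\lambda\subset V_2$, the maximality of $C_\lambda$ would leave $V_1$ free of compact invariant sets, while orbits starting in $V_1$ cannot cross the invariant $C_\lambda$, so $\overline{V_1}$ is flow-invariant and every omega-limit in $\overline{V_1}$ lies in $C_\lambda$, making $C_\lambda$ a global attractor of the restricted flow. Then \cite[Theorem~3.6]{KapRod} would turn the inclusion $C_\lambda\hookrightarrow\overline{V_1}$ into a shape equivalence, contradicting $\check{H}^{n-1}(C_\lambda)=\mathbb{Z}$. Once $K_\lambda\subset V_1$ is known, the connected set $\mathcal{A}_\lambda(K_\lambda)$, being disjoint from $C_\lambda$ and containing $K_\lambda$, is contained in $V_1$ and is therefore bounded. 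The main subtlety I expect is the verification that $C_\lambda$ is Lyapunov stable as an attractor of the restricted flow on $\overline{V_1}$, as required in order to apply \cite[Theorem~3.6]{KapRod}; this should follow from the standard fact that a compact invariant set which attracts every point of a compact metric space is automatically stable.
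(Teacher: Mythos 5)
Your treatment of the main statement is essentially the paper's: you take $C_\lambda$ to be the dual repeller of $K_\lambda$ inside the global attractor $K'_\lambda$ of $\varphi_\lambda$, prove maximality and isolation in the same way, and kill $CH^{*}(C_\lambda)$ with the attractor--repeller exact sequence of $(K_\lambda,C_\lambda)$ in $K'_\lambda$. Your shortcut of deducing \emph{all} degrees from the degree-zero isomorphism $CH^{0}(K'_\lambda)\to CH^{0}(K_\lambda)$ (instead of the paper's separate computation of $CH^{0}$ and $CH^{n}$ via an explicit isolating block and time duality) is legitimate: for $k\geq 2$ both neighbouring terms vanish, and the initial segment of the sequence handles $k=0,1$; the identification of the indices of the two attractors with $\mathbb{Z}$ in degree zero is exactly what the paper extracts from \cite[Lemma~2]{Sanjuni} and continuation invariance.

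The genuine gap is in the converse of the coercivity statement. To prove $K_\lambda\subset V_1$ you invoke the ``standard fact'' that a compact invariant set which attracts every point of a compact metric space is automatically stable. This is false: for the flow on $S^{1}$ with a single rest point $p$ and every other orbit tending to $p$ in both time directions, $\{p\}$ attracts every point of $S^{1}$ but is not stable. Since \cite[Theorem~3.6]{KapRod} is stated for attractors (attraction \emph{plus} stability), your application of it to the inclusion $C_\lambda\hookrightarrow\overline{V_1}$ is unjustified as written; moreover, the ``routine'' Alexander duality claim that $\overline{V_1}$ has the \v{C}ech cohomology of a point also needs an argument (one must know, e.g., that $S^{n}\setminus\overline{V_1}$ is connected, which is not automatic for an arbitrary compactum with the shape of $S^{n-1}$). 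Fortunately the whole detour is unnecessary: in the hypothetical situation $K_\lambda\subset V_2$ you have already shown that $V_1$ is invariant (an orbit starting in $V_1$ can never meet the invariant set $C_\lambda$), so $\overline{V_1}=V_1\cup\partial V_1$ is a non-empty compact invariant set contained in $\mathbb{R}^{n}\setminus K_\lambda$; maximality of $C_\lambda$ then forces $\overline{V_1}\subset C_\lambda$, contradicting $V_1\cap C_\lambda=\emptyset$ and $V_1\neq\emptyset$. With this repair the rest of your argument is fine: $\mathcal{A}_\lambda(K_\lambda)$ is connected, disjoint from $C_\lambda$ and contains $K_\lambda$, hence lies in the bounded component $V_1$, giving coercivity; and the forward direction, via Theorem~\ref{coercive} together with your identification of $C_\lambda$ with $\mathbb{R}^{n}\setminus(\mathcal{A}_\lambda(K_\lambda)\cup\mathcal{R}_\lambda(\{\infty\}))$, is correct and supplies a detail the paper leaves implicit.
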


\begin{proof}
Let $K_{\lambda }$ be a continuation of the global attractor of $\varphi
_{0}$. Since $\varphi _{\lambda }$ is dissipative there exists a global attractor $A_{\lambda }$ of $\varphi
_{\lambda }$ and, since $\varphi _{\lambda }$ is polar, there exists $\lambda_0>0$ such that $K_\lambda$ is a connected attractor and $K_{\lambda}\varsubsetneq A_{\lambda }$ for $0<\lambda <\lambda_{0}$.

Since $K_{\lambda }$ is an attractor of $\varphi _{\lambda }$, it must be
also an attractor of $\varphi _{\lambda }|_{A_{\lambda }}$. Let $C_{\lambda }$
be the complementary repeller of $K_{\lambda }$ for $\varphi _{\lambda
}|_{A_{\lambda}}$. It is clear that $C_{\lambda }$ is a non-empty and
invariant compactum. Moreover, if $x$ is a point in $\mathbb{R}%
^{n}\setminus K_{\lambda }$ not belonging to $C_{\lambda }$ then the trajectory of $x$
either approaches $\infty $ for negative times (if $x\in \mathbb{R}%
^{n}\setminus A_{\lambda }$) or approaches $K_{\lambda }$ for positive times (if $%
x\in A_{\lambda }$). Both situations prevent $x$ from lying in a compact
invariant set contained in $\mathbb{R}^{n}\setminus K_{\lambda }$ and, thus, $%
C_{\lambda }$ is the maximal invariant compactum lying in $\mathbb{R}%
^{n}\setminus K_{\lambda }$. The same argument proves that $C_{\lambda }$ is isolated.

Now we will see that the cohomological Conley index $CH^{\ast }(C_{\lambda
})=0$. Consider the family of flows extended to $\mathbb{R}^{n}\cup \{\infty
\}$ leaving $\infty$ fixed. Let $N$ be a positively invariant isolating block of $K_{\lambda }$
such that $N\cap C_{\lambda }=\emptyset $ \ and let $N_{\infty }$ be a
negatively invariant isolating block of $\{\infty \}$ with $N_{\infty }\cap
C_{\lambda }=\emptyset $. Then, the compactum $\hat{N}=\mathbb{R}^{n}\setminus(%
\mathring{N}\cup \mathring{N}_{\infty })$ is an isolating block of $%
C_{\lambda }$ whose exit set $\hat{N}^{o}$ agrees with $\partial N$ and its
entrance set $\hat{N}^{i}$ agrees with $\partial N_{\infty }$. Moreover, it
is easy to see that $\hat{N}/\partial N$ and $\hat{N}/\partial N_{\infty }$
are connected. This implies that $CH^{0}(C_{\lambda })=0$ and that $%
CH_0^{-}(C_{\lambda })=0$, where $CH^{-}_{\ast }(C_{\lambda })$ is the
homological Conley index for the reverse flow. Then, by the time duality of the Conley index \cite{MrSr} we also have that $%
CH^{n}(C_{\lambda })=0$.

We study now the Morse-Conley sequence of cohomology of the attractor-
repeller decomposition $(K_{\lambda },C_{\lambda })$ of $A_{\lambda }$ \cite{Fr}
\[
\cdots\xrightarrow{\delta} CH^{k}(C_{\lambda
})\rightarrow CH^{k}(A_{\lambda })\rightarrow CH^{k}(K_{\lambda
})\xrightarrow{\delta}\cdots 
\]

Since $A_{\lambda }$ is a global attractor it follows from \cite[Lemma~2]{Sanjuni} that $CH^{k}(A_{\lambda
})=0$ for every $k>0$ and $CH^{0}(A_{\lambda })=\mathbb{Z}$. On the other hand, since $K_{\lambda }$ is a continuation of $%
A_{0} $ and the Conley index is preserved by continuation 
we also have that $CH^{k}(K_{\lambda })=0$ for every $k>0$ and that $%
CH^{0}(K_{\lambda })=\mathbb{Z}.$ We deduce from this that $%
CH^{k}(C_{\lambda })=0$ as well for $k>1.$ Since we already know that $%
CH^{0}(C_{\lambda })=CH^{n}(C_{\lambda })=0$ it remains only to prove that $%
CH^{1}(C_{\lambda })$ is also trivial.

Consider the initial segment of the Morse-Conley sequence 
\[
0\rightarrow CH^{0}(C_{\lambda })\rightarrow CH^{0}(A_{\lambda })\rightarrow
CH^{0}(K_{\lambda })\rightarrow CH^{1}(C_{\lambda })\rightarrow
CH^{1}(A_{\lambda })\rightarrow\cdots
\]

Then, the connectedness of $A_\lambda$ and $K_\lambda$ ensure that the
homomorphism
\[
\mathbb{Z}\cong CH^{0}(A_{\lambda })\rightarrow CH^{0}(K_{\lambda })\cong 
\mathbb{Z} 
\]%
is an isomorphism. Therefore, since $CH^1(A_\lambda)=0$, it follows that $CH^{1}(C_{\lambda })=0$.

The last part of the theorem follows easily from Theorem~\ref{coercive}.

\end{proof}

\begin{remark}
If we only assume that the family $\varphi _{\lambda }$ is not uniformly
dissipative then we have an analogous proposition but replacing ``there
exists $\lambda _{0}$ such that for every $0<\lambda <\lambda _{0}$" by ``for
every $\lambda _{0}$ there exists $0<\lambda <\lambda _{0}"$.
\end{remark}

\begin{center}
\textbf{Acknowledgements}
\end{center}
\vspace{.15cm}

Some of the results of this paper where obtained when the first author was visiting Jagiellonian University in Krakow. He wishes to express his gratitude to the Mathematics Department of Jagiellonian University and, very especially, to Klaudiusz W\'ojcik for his hospitality.

\bibliographystyle{amsplain}
\bibliography{Biblio1}
\end{document}